\documentclass[11pt]{article}
\usepackage{mathrsfs}
\usepackage[centertags]{amsmath}
\usepackage{amsfonts}
\usepackage{amssymb}
\usepackage{amsthm}
\usepackage{cases}
\usepackage{indentfirst}
\usepackage{epsfig}
\usepackage{graphicx}
\usepackage{color}
\usepackage{multirow}
\usepackage{booktabs}
\usepackage{threeparttable}
\usepackage{graphicx}
\usepackage{subfigure}
\usepackage{placeins}
\allowdisplaybreaks[4]
\usepackage{setspace}
\doublespacing
\parskip 1ex
\pagestyle{plain} \oddsidemargin 0in \topmargin 0.0in \headheight
0in \textwidth 6.5in \textheight 9.0in
\numberwithin{equation}{section}

\definecolor{c20}{rgb}{0.,0.7,0.}
\definecolor{c30}{rgb}{0.,0.,1.}
\definecolor{c40}{rgb}{1,0.1,0.7}
\definecolor{c50}{rgb}{1,0,0}

\def\GED{\operatorname*{GED}}

\def\P{\operatorname*{P}}

\newtheorem{theorem}{Theorem}[section]

\newtheorem{lemma}{Lemma}[section]

\newtheorem{remark}{Remark}[section]

\numberwithin{equation}{section}

\begin{document}
\title{Distributional expansions of powered order statistics \\ from general error distribution}
\author{Yingyin Lu\qquad  Zuoxiang Peng\\
{School of Mathematics and Statistics, Southwest University, Chongqing, 400715, China}}
\date{}
\maketitle

\begin{quote}
{\bf Abstract.} Let $\{X_{n}, n\ge 1\}$ be a sequence of independent random variables with common general error distribution $\GED(v)$ with shape parameter $v>0$, and let $M_{n,r}$ denote the $r$th largest order statistics of $X_{1}, X_{2}, \cdots, X_{n}$. With different normalizing constants the  distributional expansions of normalized powered order statistics $|M_{n,r}|^{p}$ are established, from which the convergence rates of powered order statistics to their limits  are derived. This paper generalized Hall's results on powered-extremes of normal sequence.

{\bf Keywords.}~~Distributional expansion; general error distribution;  powered order statistic.

{\bf AMS 2000 Subject Classification.}  Primary 60G70; Secondary 60F05.

\end{quote}

\section{Introduction}

The general error distribution is an extension of normal distribution. The density of general error distribution, say $g_{v}(x)$, is given by
\begin{eqnarray}
\label{eq1.1}
g_v(x)=\frac{v\exp(-\frac{1}{2}|\frac{x}{\lambda}|^v)}{\lambda 2^{1+\frac{1}{v}}\Gamma(\frac{1}{v})}, \quad x\in \mathbb{R},
\end{eqnarray}
where $v>0$ is the shape parameter, $\lambda=[2^{-2/v}\Gamma(1/v)/\Gamma(3/v)]^{1/2}$ with $\Gamma(\cdot)$ being the Gamma function. It is known that $g_{2}(x)=\frac{1}{\sqrt{2\pi}}e^{-x^{2}/2}$, is the standard normal density; and $g_{1}(x)=\frac{1}{\sqrt{2}}e^{-\sqrt{2}|x|}$ is the density of Laplace distribution.

For the general error distribution with parameter $v$ (denoted by $\GED(v)$), Peng et al. (2009) considered its tail behavior, and Peng et al. (2010) established the uniform convergence rates of normalized extremes under optimal normalizing constants, and Jia and Li (2014) established the higher-order distributional expansions of normalized maximum. Peng et al. (2010) and Jia and Li (2014) showed that the optimal convergence rate is proportional to $1/\log n$, similar to the results of Hall (1979) and Nair (1981) on normal extremes. In order to improve the convergence rate of normal extremes, Hall (1980) studied the asymptotics of $|M_{n,r}|^{p}$, the powered $r$th largest order statistics,  and showed that the distribution of normalized $M_{n,r}^{2}$ converges to its limit at the rate of $1/(\log n)^{2}$ under optimal normalizing constants, while the convergence rates are still the order of $1/\log n$ in the case of $p\ne 2$. For more details, see Hall (1980).  For more work on higher-order expansions of powered-extremes from normal samples, see Zhou and Ling (2016) for distributional expansions and Li and Peng (2018) for moment expansions. For other related work on distributional expansions of extremes, see Liao and Peng (2012) for lognormal distribution, Liao et al. (2014a, 2014b) for logarithmic general error distribution and skew-normal distribution, and Hashorva et al. (2016), Liao and Peng (2014, 2015), Liao et al. (2016) and Lu and Peng (2017) for bivariate H\"{u}sler-Reiss models.

Motivated by the work of Peng et al. (2010), Jia and Li (2014) and Hall (1980), the objective of this paper is to consider the higher-order expansions of powered order statistics from $\GED(v)$ sample. Let $X_{1}, X_{2}, \cdots, X_{n}$ be independent identically distributed random variables with common distribution function $G_{v}$ following $\GED(v)$ with density given by \eqref{eq1.1}. For positive integer $r$, let $M_{n,r}$ denote the $r$th largest order statistics of $X_{1}, X_{2}, \cdots, X_{n}$ and $M_{n}=M_{n,1}=\max_{1\le k\le n}\{X_{k}\}$ for later use. It is known that the distributional convergence rate of normalized maximum may depend heavily on the normalizing constants, see Leadbetter et al. (1983), Resnick (1987), Hall (1979, 1980) and Nair (1981) for normal samples. For the $\GED(v)$ distribution and positive power index $p$, it is necessary to discuss how to find the normalizing constants $c_{n}>0$ and $d_{n}$ such that
\begin{equation}\label{add0}
\lim_{n\to\infty}\Big(\P\{|M_{n,r}|^{p}\le c_{n}x+d_{n}\}- \Lambda_{r}(x)\Big)=0,
\end{equation}
where $\Lambda_{r}(x)=\Lambda(x)\sum_{j=0}^{r-1}e^{-jx}/j!$ with $\Lambda(x)=\exp(-\exp(-x)), x\in \mathbb{R}$, and further work on distributional expansions and convergence rates of $|M_{n,r}|^{p}$ with different normalizing constants.

For $v > 0$ with normalizing constants $\alpha_{n}$ and $\beta_{n}$ given by
\begin{eqnarray}\label{eq1.2}
\begin{cases}
\alpha_{n}=\frac{2^{1/v}\lambda}{v(\log n)^{1-1/v}};\\
\beta_{n}=2^{1/v}\lambda \left(\log n\right)^{1/v} -
\frac{2^{1/v}\lambda \left[ ((v-1)/v) \log \log n + \log \left\{
2\Gamma\left( 1/v\right) \right\} \right]} {v\left(\log n\right)^{1 - 1/v}}.
\end{cases}
\end{eqnarray}
Peng et al. (2009) showed that
\begin{equation}\label{eq1.3}
\frac{M_{n}-\beta_{n}}{\alpha_{n}}\stackrel{d}{\to}M,
\end{equation}
where $M$ follows the Gumbel extreme value distribution $\Lambda(x)$. With normalizing constants $\alpha^{*}_{n}$ and $\beta^{*}_{n}$ given by
\begin{equation}\label{eq1.5}
\begin{cases}
\alpha^{*}_{n}=p\alpha_{n}\beta_{n}^{p-1};\\
\beta^{*}_{n}=\beta_{n}^{p}.
\end{cases}
\end{equation}
We will show that \eqref{add0} holds  and investigate further the higher-order expansions and convergence rates of \eqref{add0}. Similarly, with $p\ne v$ the optimal convergence rates of $|M_{n,r}|^{p}$ is derived under the following normalizing constants
\begin{equation}\label{eq1.6}
c_{n}=2pv^{-1}\lambda^{v}b_{n}^{p-v},\quad d_{n}=b_{n}^{p},
\end{equation}
where constant $b_{n}$ is the solution of the equation
\begin{equation}\label{eq1.7}
2^{1/v}\lambda ^{1-v}
\Gamma(1/v)b_{n}^{v-1}\exp\left(\frac{b_{n}^{v}}{2\lambda^{v}}\right)=n.
\end{equation}
Note that for the normal case, it follows from \eqref{eq1.6} that $c_{n}=pb_{n}^{p-2}, d_{n}=b_{n}^{p}$ since $v=2, \lambda=1$, which are just the normalizing constants given by Hall (1980).

For the normal case, Hall (1980) showed that the optimal convergence rate of $M_{n,r}^{2}$ is the order of $1/(\log n)^{2}$ if we choose the normalizing constants $c_{n}^{*}=2(1-b_{n}^{-2})$ and $d_{n}^{*}=b_{n}^{2}-2b_{n}^{-2}$. For the powered $r$th largest order statistics $|M_{n,r}|^{p}$ from the $\GED(v)$ sample, it follows from \eqref{eq1.6} that the convergence rate can be improved if $p=v$. Some techniques are used here to find the optimal normalizing constants $c_{n}^{*}$ and $d_{n}^{*}$ as $p=v$. Details are given as follows.

By  Eq. (3.1) of Lemma 1 in Jia and Li (2014), for $v\in(0,1)\cup(1,+\infty)$ and large $x$ we have
\begin{eqnarray}\label{eq1.8}
1-G_{v}(x)&=&\frac{2\lambda^{v}}{v}\Big\{1+2(v^{-1}-1)\lambda^{v}x^{-v}+4(v^{-1}-1)(v^{-1}-2)\lambda^{2v}x^{-2v}\nonumber\\
&&+8(v^{-1}-1)(v^{-1}-2)(v^{-1}-3)\lambda^{3v}x^{-3v}+O(x^{-4v})\Big\}x^{1-v}g_{v}(x),
\end{eqnarray}
where $X\sim G_{v}(x)$, the $\GED(v)$ distribution function. For the special case of $p=v$, let $F(x)=\P\{|X|^{v}\le x\}$ denote the distribution of $|X|^{v}$, we have $1-F(x)=2\{1-G_{v}(x^{1/v})\}$. Without loss of generality, assume that $1+2(v^{-1}-1)\lambda^{v}\neq0$. It follows from \eqref{eq1.8} that
\begin{align}\label{eq1.9}
1-F(x)&=\frac{2^{1-1/v}\lambda^{v-1}}{\Gamma(1/v)}\Big\{1+2(v^{-1}-1)\lambda^{v}x^{-1}+4(v^{-1}-1)(v^{-1}-2)\lambda^{2v}x^{-2}\nonumber\\
&\quad+8(v^{-1}-1)(v^{-1}-2)(v^{-1}-3)\lambda^{3v}x^{-3}+O(x^{-4})\Big\}x^{1/v-1}\exp\left(-\frac{x}{2\lambda^{v}}\right)\nonumber\\
&= \frac{2^{1-1/v}\lambda^{v-1}}{\Gamma(1/v)}\Big\{1+4(v^{-1}-1)(v^{-1}-2)\lambda^{2v}x^{-2}-16(v^{-1}-1)(v^{-1}-2)\lambda^{3v}x^{-3}
+O(x^{-4})\Big\}\nonumber\\
&\quad\times\left(1+2(v^{-1}-1)\lambda^{v}x^{-1} \right)x^{1/v-1}\exp\left(-\frac{x}{2\lambda^{v}}\right)\nonumber\\
&=\frac{2^{1-1/v}\lambda^{v-1}\left(1+2(v^{-1}-1)\lambda^{v}\right)e^{\frac{1}{2\lambda^{v}}}}{\Gamma(1/v)}
\Big\{1+4(v^{-1}-1)(v^{-1}-2)\lambda^{2v}x^{-2}\nonumber\\
&\quad-16(v^{-1}-1)(v^{-1}-2)\lambda^{3v}x^{-3}+O(x^{-4})\Big\}\exp\left(-\int_{1}^{x}\frac{g(t)}{f(t)}dt\right),
\end{align}
where
\begin{equation}\label{eq1.10}
\begin{cases}
g(t)=1-4(v^{-1}-1)(v^{-1}-2)\lambda^{2v}t^{-2}\to 1\; \mbox{as}\; t\to\infty;\\
f(t)= 2\lambda^{v}\left(1+2\lambda^{v}(v^{-1}-1)t^{-1}\right)\;\mbox{with}\; f^{\prime}(t)\to 0\; \mbox{as}\; t\to\infty.
\end{cases}
\end{equation}
Now similar to Hall (1980), we choose
\begin{equation}\label{eq1.11}
\begin{cases}
d_{n}^{*}=b^{v}_{n}+4(v^{-1}-1)\lambda^{2v}b_{n}^{-v}\\
c_{n}^{*}=f(b_{n}^{v})=2\lambda^{v}+4(v^{-1}-1)\lambda^{2v}b_{n}^{-v},
\end{cases}
\end{equation}
where $b_{n}$ is given by \eqref{eq1.7}. Note that if $v=2,\lambda=1$, the normal case,  $c_{n}^{*}=2-2b_{n}^{-2}$, $d_{n}^{*}=b_{n}^{2}-2b_{n}^{-2}$, just the normalizing constants given by Hall (1980).

Rest of this paper is organized as follows. Section \ref{sec2} provides the main results. Proofs of the main results with
some auxiliary lemmas are deferred to Section \ref{sec3}.

\section{Main results}\label{sec2}
In this section, we provide the higher-order distributional expansions of powered order statistics under different normalizing constants. Throughout this paper,  let $\Lambda_{r}(x)=\Lambda(x)\sum_{j=0}^{r-1}e^{-jx}/j!$
for positive integer $r$ and $ \Lambda_{r}(x)=0 $ for $ r\leq 0$.

\begin{theorem}\label{thm1}
Let $\{X_n,n\ge 1\}$ be a sequence of independent random variables with common distribution $G_v(x), v > 0$, and $M_{n,r}$ denotes the $r$th largest order statistics of $\{X_{1}, X_{2}, \cdots, X_{n}\}$. Then,
\begin{itemize}
\item[(i)]~if $v=1$ and $p=1$, with normalizing constants $\alpha^{*}_{n}$ and $\beta^{*}_{n}$ given by $\alpha^{*}_{n}=2^{-\frac{1}{2}}$, $\beta^{*}_{n}=2^{-\frac{1}{2}}\log \frac{n}{2}$, we have
\begin{eqnarray*}
  &&\lim_{n\to\infty}n\Big\{n\Big[ \P\left(|M_{n,r}|^p\leq \alpha_{n}^{*}x+\beta_{n}^{*}\right)-\Lambda_{r}(x)\Big]-\frac{e^{-(r+1)x}\left[(r-1)e^{x}-1\right]}{2(r-1)!}\Lambda(x)\Big\}\\
  &&=\frac{e^{-(r+2)x}}{24(r-1)!}\left[(-3r^{3}+10r^{2}-9r+2)e^{2x}+(9r^{2}-11r+2)e^{x}+3e^{-x}-9r+1\right]\Lambda(x).
  \end{eqnarray*}
\item[(ii)]~if $v=1$ and $p\neq 1$, with normalizing constants $\alpha^{*}_{n}$ and $\beta^{*}_{n}$ given by $\alpha^{*}_{n}=p2^{-\frac{p}{2}}(\log \frac{n}{2})^{p-1}$, $\beta^{*}_{n}=(2^{-\frac{1}{2}}\log \frac{n}{2})^p$, we have
\begin{eqnarray*}
  &&\lim_{n\to\infty}(\log n)\Big\{(\log\frac{n}{2})\Big[ \P\left(|M_{n,r}|^p\leq \alpha_{n}^{*}x+\beta_{n}^{*}\right)-\Lambda_{r}(x)\Big]-\frac{(1-p)x^2e^{-rx}}{2(r-1)!}\Lambda(x)\Big\}\\
  &&=\frac{(1-p)x^{3}e^{-rx}}{24(r-1)!}\left[4(1-2p)-3(1-p)rx+3(1-p)xe^{-x}\right]\Lambda(x).
  \end{eqnarray*}
\item[(iii)]~if $v\in(0,1)\bigcup(1,+\infty)$ and $p>0$, with normalizing constants $\alpha^{*}_{n}$ and $\beta^{*}_{n}$ given by \eqref{eq1.5}, we have
 \begin{eqnarray*}
   &&\lim_{n\to\infty}(\log\log n)\Big\{(\frac{\log n}{(\log\log n)^2})\big[\P\left(|M_{n,r}|^{p}\le \alpha_{n}^{*}x+\beta_{n}^{*}\right)-\Lambda_{r}(x)\big]-\frac{(1-v^{-1})^{3}e^{-rx}}{2(r-1)!}\Lambda(x)\Big\}\\
  &&=-(1-v^{-1})^{2}\left(1-\log2\Gamma(\frac{1}{v})+x\right)\frac{e^{-rx}}{(r-1)!}\Lambda(x).
  \end{eqnarray*}
\end{itemize}
\end{theorem}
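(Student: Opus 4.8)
The plan is to treat all three parts through one common reduction and then carry out a case-dependent expansion of the tail probability. First I would replace the two-sided event $\{|M_{n,r}|^{p}\le\alpha_{n}^{*}x+\beta_{n}^{*}\}$ by the one-sided event $\{M_{n,r}\le u_{n}\}$, where $u_{n}=(\alpha_{n}^{*}x+\beta_{n}^{*})^{1/p}$. The two differ only by the probability that $M_{n,r}<-u_{n}$, which forces at least $n-r+1$ of the $X_{i}$ to fall below $-u_{n}$ and is bounded by $\binom{n}{r-1}G_{v}(-u_{n})^{\,n-r+1}$; since $u_{n}\to\infty$ this decays faster than any power of $1/n$ or $1/\log n$ and so contributes nothing at any order in the theorem. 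It then suffices to expand
\begin{equation*}
\P(M_{n,r}\le u_{n})=\sum_{k=0}^{r-1}\binom{n}{k}\bigl[1-G_{v}(u_{n})\bigr]^{k}\bigl[G_{v}(u_{n})\bigr]^{n-k}.
\end{equation*}

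Next I would set $t_{n}=n\bigl[1-G_{v}(u_{n})\bigr]$ and expand $\binom{n}{k}n^{-k}=\frac{1}{k!}\prod_{j=0}^{k-1}(1-j/n)$ and $G_{v}(u_{n})^{n-k}=\exp\{(n-k)\log(1-t_{n}/n)\}$ in powers of $1/n$. For each fixed $k$ this produces the Poisson weight $\frac{t_{n}^{k}}{k!}e^{-t_{n}}$ times $1+n^{-1}a_{k}(t_{n})+n^{-2}b_{k}(t_{n})+\cdots$ with explicit polynomials $a_{k},b_{k}$. Summing over $k$ and using the telescoping identity $\frac{d}{dt}\bigl(e^{-t}\sum_{k=0}^{r-1}t^{k}/k!\bigr)=-\frac{t^{r-1}}{(r-1)!}e^{-t}$, the sum collapses to
\begin{equation*}
\P(M_{n,r}\le u_{n})=\phi(t_{n})+\frac{A(t_{n})}{n}+\frac{B(t_{n})}{n^{2}}+\cdots,\qquad \phi(t)=e^{-t}\sum_{k=0}^{r-1}\frac{t^{k}}{k!},
\end{equation*}
with $A,B$ explicit. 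Since $\phi(e^{-x})=\Lambda_{r}(x)$ and $\phi'(e^{-x})=-\frac{e^{-(r-1)x}}{(r-1)!}\Lambda(x)$, the whole problem reduces to expanding $t_{n}$ about $e^{-x}$ and Taylor-expanding $\phi$.

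The decisive input is the asymptotics of $t_{n}$, which is where the three cases separate. For $v=1$ the Laplace tail is exact, $1-G_{1}(x)=\tfrac12 e^{-\sqrt2\,x}$ on $x>0$; with $p=1$ the constants $\alpha_{n}^{*}=2^{-1/2}$, $\beta_{n}^{*}=2^{-1/2}\log(n/2)$ give $\sqrt2\,u_{n}=x+\log(n/2)$ and hence $t_{n}=e^{-x}$ identically, so $\phi(t_{n})=\Lambda_{r}(x)$ and part (i) comes entirely from the coefficients $A(e^{-x})$ and $B(e^{-x})$, a finite algebraic computation. When $p\ne1$, inverting $u_{n}=2^{-1/2}\bigl[\log(n/2)+x+\tfrac{(1-p)x^{2}}{2\log(n/2)}+\cdots\bigr]$ gives $t_{n}=e^{-x}\bigl[1-\tfrac{(1-p)x^{2}}{2\log(n/2)}+O((\log n)^{-2})\bigr]$; here the $O(1/\log n)$ deviation dominates the $O(1/n)$ binomial terms, and part (ii) follows from a two-term Taylor expansion of $\phi$, the $(\log n)^{-2}$ piece of $t_{n}$ together with the quadratic term $\tfrac12\phi''(e^{-x})$ supplying the second correction. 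For $v\in(0,1)\cup(1,\infty)$ I would substitute $u_{n}=\beta_{n}+\alpha_{n}x+\tfrac{p(1-p)}{2}\alpha_{n}^{2}\beta_{n}^{-1}x^{2}+\cdots$ into the four-term tail \eqref{eq1.8} and form $u_{n}^{v}$; the nested $\log\log n$ term inside $\beta_{n}$ in \eqref{eq1.2}, squared during the expansion of $\beta_{n}^{v}$, yields $t_{n}=e^{-x}\bigl[1-\tfrac{(1-v^{-1})^{3}}{2}\tfrac{(\log\log n)^{2}}{\log n}+c(x)\tfrac{\log\log n}{\log n}+\cdots\bigr]$ for a linear-in-$x$ coefficient $c(x)$, and a first-order Taylor step in $\phi$ then delivers both displayed terms of part (iii).

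The main obstacle is the bookkeeping in the expansion of $t_{n}$ for part (iii): one must carry \eqref{eq1.8} to four terms, expand $(\beta_{n}+\alpha_{n}x+\cdots)^{v}$ retaining every contribution down to order $\log\log n/\log n$, and track the interplay between the $x^{2}$-term of $u_{n}$, the $\log\log n$ term of $\beta_{n}$, and the $x^{-v},x^{-2v}$ corrections of the tail, since the leading $(\log\log n)^{2}/\log n$ rate comes solely from the square of the logarithmic term and is easily contaminated by miscounted lower-order pieces. By contrast part (i) is a purely algebraic identity once $t_{n}=e^{-x}$ is known, and part (ii) needs only the exact Laplace tail and the second-order behaviour of $\phi$; the real effort is concentrated in verifying the coefficients $(1-v^{-1})^{3}$ and $-(1-v^{-1})^{2}\bigl(1-\log 2\Gamma(\tfrac1v)+x\bigr)$ in the non-Laplace case.
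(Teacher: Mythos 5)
Your proposal follows essentially the same route as the paper: it reduces the two-sided event to $\{M_{n,r}\le u_{n}\}$ at superexponentially small cost (the paper's $O(n^{r-1}2^{-n})$ terms), expands the binomial sum around the Poisson limit $\phi(t_{n})$ (the paper's Lemma 3.3, written there in terms of $\theta_{n,v}(x)=e^{x}t_{n}$, supplemented by the direct order-$n^{-2}$ computation needed for part (i) where $t_{n}=e^{-x}$ exactly), and then feeds in the case-specific tail expansions (the paper's Lemma 3.1). One minor algebraic slip to fix when you carry out part (iii): since $u_{n}=\beta_{n}\bigl(1+p\alpha_{n}x/\beta_{n}\bigr)^{1/p}$, the quadratic coefficient is $\frac{1-p}{2}\alpha_{n}^{2}\beta_{n}^{-1}x^{2}$, not $\frac{p(1-p)}{2}\alpha_{n}^{2}\beta_{n}^{-1}x^{2}$.
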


\begin{theorem}\label{thm2}
Let $M_{n,r}$ denotes the $r$th  largest order statistics of $\{X_{k},1\le k\le n\} $, a sample from the $\GED(v)$ distribution, then
\begin{itemize}
\item[(i)]~if $v\in(0,1)\bigcup(1,+\infty)$ and $p\neq v$, with normalizing constants $c_{n}$ and $d_{n}$ given by \eqref{eq1.6}, we have
  \begin{eqnarray*}
   &&\lim_{n\to\infty}b_n^v\bigg\{b_n^v\Big[\P\left(|M_{n,r}|^p\leq c_nx+d_n\right)-\Lambda_{r}(x)\big]-\Lambda(x)h_v(x)\frac{e^{-(r-1)x}}{(r-1)!}\bigg\}\\
   &=&\left[q_v(x)+\left(1-(r-1)e^{x}\right)\frac{h_v^2(x)}{2}\right]\frac{e^{-(r-1)x}}{(r-1)!}\Lambda(x),
   \end{eqnarray*}
where
\begin{eqnarray}
\label{eq2.1}
h_v(x)=[v^{-1}(v-p)\lambda^vx^2-2v^{-1}(1-v)\lambda^vx-2(v^{-1}-1)\lambda^v]e^{-x}
\end{eqnarray}
and
\begin{eqnarray}
\label{eq2.2}
q_v(x)&=&\Big[-\frac{1}{2}\lambda^{2v}v^{-2}(v-p)^{2}x^{4}+v^{-2}(v-p)\lambda^{2v}(2-\frac{4}{3}v-\frac{4}{3}p)x^3-2v^{-2}(1-v)\lambda^{2v}x^2\nonumber\\
&&-4(v^{-1}-1)(v^{-1}-2)\lambda^{2v}x-4(v^{-1}-1)(v^{-1}-1)\lambda^{2v}\Big]e^{-x}.
\end{eqnarray}
\item[(ii)]~if $v\in(0,1)\bigcup(1,+\infty)$ and $p=v$, with normalizing constants $c^{*}_{n}$ and $d^{*}_{n}$ given by \eqref{eq1.11}, we have
 \begin{eqnarray*}
   \lim_{n\to\infty}b_n^{v}\bigg\{b_n^{2v}\Big[\P\left(|M_{n,r}|^v\leq c_n^*x+d_n^*\right)-\Lambda_{r}(x)\Big]-\Lambda(x)s_{v}(x)\frac{e^{-(r-1)x}}{(r-1)!}\bigg\}=\Lambda(x)b_{v}(x)\frac{e^{-(r-1)x}}{(r-1)!},
  \end{eqnarray*}
where
\begin{eqnarray}
\label{eq2.3}
s_{v}(x)=2(v^{-1}-1)\lambda^{2v}\bigg[x^2-2(v^{-1}-2)x-(3v^{-1}-5)\bigg]e^{-x},
\end{eqnarray}
and
\begin{eqnarray}
\label{eq2.4}
b_{v}(x)&=&-\frac{4}{3}(v^{-1}-1)\lambda^{3v}\Big[(4-v^{-1})(v^{-1}-1)x^3-6(v^{-1}-2)x^2\nonumber\\
&&-6(3v^{-1}-5)x+(2v^{-2}-22v^{-1}+32)\Big]e^{-x}.
\end{eqnarray}
\end{itemize}
\end{theorem}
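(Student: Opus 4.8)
The plan is to prove Theorem 2.2(ii), the case $p=v$, by building on the asymptotic representation of $1-F(x)$ derived in \eqref{eq1.9} and the choice of optimal normalizing constants in \eqref{eq1.11}. The overall strategy follows Hall's method: express the tail in the ``exponential integral'' form so that the normalizing constants $c_n^*$ and $d_n^*$ absorb the leading correction terms, leaving a remainder that decays like $b_n^{-3v}$ rather than $b_n^{-v}$. The gain of an extra power of $b_n^v$ (equivalently of $\log n$) over the generic case (i) is exactly the analogue of Hall's improved rate $1/(\log n)^2$ for $p=2$ in the normal case.

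First I would establish the key expansion of $n(1-F(c_n^* x + d_n^*))$ as $n\to\infty$. Starting from \eqref{eq1.9}--\eqref{eq1.11}, I would substitute $x \mapsto c_n^* x + d_n^*$ and carefully expand the integral $\int_1^{c_n^* x+d_n^*} g(t)/f(t)\,dt$ together with the polynomial prefactor $\{1 + 4(v^{-1}-1)(v^{-1}-2)\lambda^{2v} x^{-2} - \cdots\}$. The defining equation \eqref{eq1.7} for $b_n$ pins down the leading behavior, and the specific forms of $d_n^*$ and $c_n^*$ in \eqref{eq1.11} are engineered so that the $O(b_n^{-v})$ term in $n(1-F)$ cancels. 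I expect to obtain an expansion of the shape
\begin{equation*}
n\bigl(1-F(c_n^* x+d_n^*)\bigr)=e^{-x}\Bigl(1+ b_n^{-2v}A_v(x)+b_n^{-3v}B_v(x)+o(b_n^{-3v})\Bigr),
\end{equation*}
where $A_v$ and $B_v$ are explicit polynomials times $e^{-x}$ factors, with $s_v(x)$ and $b_v(x)$ emerging from $A_v$ and $B_v$ after the order-statistic bookkeeping below. Getting these two coefficients correct, and verifying that the $b_n^{-v}$ coefficient indeed vanishes under \eqref{eq1.11}, is the computational heart of the argument.

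Next I would convert the tail expansion into an expansion for the $r$th largest order statistic. Writing $F_n := F(c_n^* x + d_n^*)$ and using the classical identity
\begin{equation*}
\P\bigl(|M_{n,r}|^v\le c_n^* x+d_n^*\bigr)=\sum_{k=0}^{r-1}\binom{n}{k}(1-F_n)^k F_n^{\,n-k},
\end{equation*}
I would set $u_n := n(1-F_n)$ and expand $F_n^{\,n-k}=\exp\{(n-k)\log(1-u_n/n)\}$ to the required order. Substituting the three-term expansion of $u_n=e^{-x}(1+b_n^{-2v}A_v+b_n^{-3v}B_v+\cdots)$ and collecting terms by powers of $b_n^{-v}$, the zeroth order reproduces $\Lambda_r(x)$, the $b_n^{-2v}$ coefficient yields $\Lambda(x)s_v(x)e^{-(r-1)x}/(r-1)!$, and the $b_n^{-3v}$ coefficient yields $\Lambda(x)b_v(x)e^{-(r-1)x}/(r-1)!$. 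A useful check is that all contributions to a given order factor through $\Lambda(x)e^{-(r-1)x}/(r-1)!$, as forced by the differentiation structure of $\Lambda_r$; this is a strong consistency constraint on the algebra.

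The main obstacle will be the bookkeeping in the second and third steps, not any conceptual difficulty. Because the leading $b_n^{-v}$ term cancels, the target coefficients $s_v$ and $b_v$ live at orders $b_n^{-2v}$ and $b_n^{-3v}$, so I must carry every expansion (the tail, the logarithm $\log(1-u_n/n)$, the exponential, and the binomial sum over $k$) accurately through three orders, keeping cross-terms such as the $\tfrac12 h_v^2$-type products that appear in part (i). Controlling the error term requires showing the $O(x^{-4})$ remainder in \eqref{eq1.9} contributes only $o(b_n^{-3v})$ uniformly in $x$ on compact sets, which I would handle as in Hall (1980) and Jia and Li (2014). Once the two coefficient polynomials are assembled and matched against \eqref{eq2.3}--\eqref{eq2.4}, the stated limit follows immediately.
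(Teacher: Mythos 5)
Your proposal has a genuine gap, and it sits at the center of your second step: the ``classical identity''
\begin{equation*}
\P\bigl(|M_{n,r}|^v\le c_n^* x+d_n^*\bigr)=\sum_{k=0}^{r-1}\binom{n}{k}(1-F_n)^k F_n^{\,n-k},\qquad F_n=F(c_n^* x+d_n^*),
\end{equation*}
is false, because $|M_{n,r}|^v$ is \emph{not} the $r$th largest order statistic of the sample $|X_1|^v,\dots,|X_n|^v$. By the theorem's definition, $M_{n,r}$ is the $r$th largest of the \emph{signed} variables $X_i$, and the absolute value and power are applied afterwards. Since $\GED(v)$ is symmetric the distinction is not negligible: already for $r=1$, $\max_i|X_i|=\max\bigl(M_n,\,-\min_i X_i\bigr)$ exceeds $|M_n|=M_n$ with probability tending to $1/2$. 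Quantitatively, writing $z=(c_n^* x+d_n^*)^{1/v}$, one has $1-F(z^v)=2\{1-G_v(z)\}$, and the calibration \eqref{eq1.7} of $b_n$ forces $n\{1-G_v(z)\}\to e^{-x}$; hence $n(1-F_n)\to 2e^{-x}$, not $e^{-x}$ as your Step 1 asserts. So your two steps are mutually inconsistent: an honest computation of Step 1 from \eqref{eq1.9} gives the extra factor $2$, and feeding that into your identity produces the limit $\exp(-2e^{-x})\sum_{k=0}^{r-1}(2e^{-x})^k/k!=\Lambda_r(x-\log 2)$, contradicting the theorem; the only way your plan reproduces $\Lambda_r(x)$ is by compounding the false identity with a factor-of-two slip in Step 1, which is not a proof even though the two mistakes formally cancel (using $F$ while dropping the $2$ is the same as using $G_v$).

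The correct route — and the one the paper takes — is to dispense with $F$ entirely in the proof: $\P(|M_{n,r}|^v\le z^v)=\P(M_{n,r}\le z)-\P(M_{n,r}<-z)$, where the lower-tail term is $O(n^{r-1}2^{-n})$, so the binomial expansion must be run with $G_v$. This is exactly the paper's Lemma \ref{lemma3}, which works with $\theta_{n,v}(x)=ne^{x}\{1-G_v(z_n(x))\}$ and yields a second-order formula in $1-\theta_{n,v}(x)$ valid up to $O(n^{-1})$; the tail input is the paper's Lemma \ref{lemma2}, which expands $1-G_v\bigl((c_n^* x+d_n^*)^{1/v}\bigr)$ directly from \eqref{eq1.8} (the representation \eqref{eq1.9} with $F$ serves only to motivate the choice \eqref{eq1.11} of $c_n^*,d_n^*$ via Hall's method). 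Apart from this misidentification, your outline — three-term tail expansion exploiting the cancellation of the $b_n^{-v}$ term under \eqref{eq1.11}, then order-by-order bookkeeping in the binomial sum keeping the quadratic cross terms — does parallel the paper's strategy. A secondary gap: the statement has two parts, and you address only part (ii); part (i) ($p\ne v$, constants \eqref{eq1.6}), where the $b_n^{-v}$ term survives and generates $h_v$ and the $\tfrac12 h_v^2$ cross term, requires the same two-lemma treatment.
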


\begin{remark}\label{remark1}
Theorem \ref{thm1} (i)-(ii) shows the difference of the optimal convergence rates for the powered order statistics of the Laplace distribution as $p=1$ and $p\ne 1$, respectively. Meanwhile, it follows from \eqref{eq1.2}, \eqref{eq1.5}-\eqref{eq1.7} that $c_{n}=\alpha_{n}^{*},d_{n}=\beta_{n}^{*}$ as $v=1$ since $\lambda=2^{-3/2}$, so it is not necessary to consider the case of $v=1$ in Theorem \ref{thm2}.
\end{remark}



\begin{remark}\label{remark2}
For $p\not=v$, with normalizing constants $c_{n}$ and $d_{n}$ given by \eqref{eq1.6} Theorem \ref{thm2} shows that the optimal convergence rate of $\P(|M_{n,r}|^p\leq c_nx+d_n)$ to the extreme value distribution $\Lambda_{r}(x)$ is proportional to $1/ \log n$ since $b_{n}^{v}\sim 2\lambda^{v}\log n$ by \eqref{eq1.7}, while it can be improved to the order of $1/(\log n)^2$ with optimal choice of normalizing constants $c_{n}^{*}$ and $d_{n}^{*}$ given by \eqref{eq1.11} as $p=v$, which coincides with the normal case studied by Hall (1980).
\end{remark}

\section{The proofs}\label{sec3}
In order to prove the main results, we need some auxiliary lemmas.
\begin{lemma}\label{lemma1}
Let $G_v(x)$ denote the $\GED(v)$ distribution function with parameter $v>0$, then
\begin{itemize}
\item[(i)]~if $v=1$ and $p=1$, with normalizing constants $\alpha^{*}_{n}$ and $\beta^{*}_{n}$ given by $\alpha^{*}_{n}=2^{-\frac{1}{2}}$, $\beta^{*}_{n}=2^{-\frac{1}{2}}\log \frac{n}{2}$, for $\alpha^{*}_{n}x+\beta^{*}_{n}>0$ we have
\begin{eqnarray}
\label{eq3.2}
1-G_1\big(\alpha^*_nx+\beta^*_n\big)=n^{-1} e^{-x}.
\end{eqnarray}

\item[(ii)]~if $v=1$ and $p\neq 1$, with normalizing constants $\alpha^{*}_{n}$ and $\beta^{*}_{n}$ given by $\alpha^{*}_{n}=p2^{-\frac{p}{2}}(\log \frac{n}{2})^{p-1}$, $\beta^{*}_{n}=(2^{-\frac{1}{2}}\log \frac{n}{2})^p$, for large $n$ we have
\begin{align}
\label{eq3.1}
1-G_1\big((\alpha^*_nx+\beta^*_n)^{\frac{1}{p}}\big)
=n^{-1}e^{-x}\left\{1-\frac{(1-p)x^2}{2\log \frac{n}{2}}+\frac{(1-p)[3(1-p)x-4(1-2p)]x^{3}}{24(\log \frac{n}{2})^{2}}+o(\frac{1}{(\log \frac{n}{2})^{2}})\right\}.
\end{align}

\item[(iii)]~if $v\in(0,1)\bigcup(1,+\infty)$, with normalizing constants $\alpha^{*}_{n}$ and $\beta^{*}_{n}$ given by \eqref{eq1.5}, for large $n$ we have
\begin{align}
\label{eq3.3}
1-G_v\big((\alpha^*_nx+\beta^*_n)^{\frac{1}{p}}\big)=&n^{-1}e^{-x}\left\{1-\frac{(1-v^{-1})^3(\log\log n)^{2}}{2\log n}\right.\nonumber\\
&\left.+\frac{(1-v^{-1})^{2}(1-\log 2\Gamma({1}/{v}) +x)\log\log n}{\log n}+o\big(\frac{\log\log n}{\log n}\big)\right\}.
\end{align}
\end{itemize}
\end{lemma}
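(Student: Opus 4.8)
The plan is to handle the three cases separately, exploiting a common reduction: in each case the argument $(\alpha_n^*x+\beta_n^*)^{1/p}$ factors as a centering constant times a near-unit power. Indeed $\alpha_n^*x+\beta_n^*=2^{-p/2}(\log\tfrac n2)^p\bigl(1+px/\log\tfrac n2\bigr)$ in case (ii), so its $1/p$-th power is $2^{-1/2}(\log\tfrac n2)(1+px/\log\tfrac n2)^{1/p}$, while in case (iii) $\alpha_n^*x+\beta_n^*=\beta_n^p(1+p\alpha_nx/\beta_n)$, giving $\beta_n(1+p\alpha_nx/\beta_n)^{1/p}$. Thus the tail can be read off from a closed form (for $v=1$) or from \eqref{eq1.8}, after which everything reduces to Taylor-expanding the near-unit power in the small quantity $x/\log\tfrac n2$ or $\alpha_n x/\beta_n$.

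For part (i) I would use the explicit Laplace tail $1-G_1(y)=\tfrac12 e^{-\sqrt2\,y}$ valid for $y>0$. Since $\alpha_n^*x+\beta_n^*=2^{-1/2}(x+\log\tfrac n2)$, we get $\sqrt2\,(\alpha_n^*x+\beta_n^*)=x+\log\tfrac n2$ and hence $1-G_1(\alpha_n^*x+\beta_n^*)=\tfrac12 e^{-x}e^{-\log(n/2)}=n^{-1}e^{-x}$ exactly, with no expansion needed. For part (ii) the tail is still exactly $\tfrac12 e^{-\sqrt2 y}$, so the only approximation enters through the power. Writing $L=\log\tfrac n2$, I would Taylor-expand $(1+px/L)^{1/p}$ to third order to obtain $\sqrt2\,y=L+x+\frac{(1-p)x^2}{2L}+\frac{(1-p)(1-2p)x^3}{6L^2}+O(L^{-3})$, then write $\tfrac12 e^{-\sqrt2 y}=n^{-1}e^{-x}e^{-\delta}$ with $\delta$ the sum of the two $L$-corrections, and expand $e^{-\delta}=1-\delta+\tfrac12\delta^2+\cdots$, collecting powers of $L$. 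Here the $\tfrac12\delta^2$ term supplies the $x^4$-contribution at order $L^{-2}$, and the $L^{-2}$ coefficient reorganizes exactly into $\frac{(1-p)[3(1-p)x-4(1-2p)]x^3}{24}$, giving \eqref{eq3.1}.

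For part (iii) I would start from \eqref{eq1.8} with $w=\beta_n(1+\eta)^{1/p}$, $\eta=p\alpha_nx/\beta_n$. Writing $C=\frac{v-1}{v}\log\log n+\log(2\Gamma(1/v))$, from \eqref{eq1.2} one has $\beta_n=2^{1/v}\lambda(\log n)^{1/v}\bigl(1-C/(v\log n)\bigr)$, whence $\beta_n^v/(2\lambda^v)=\log n-C+\frac{(v-1)C^2}{2v\log n}+\cdots$ and $\alpha_n/\beta_n=(v\log n)^{-1}(1+C/(v\log n)+\cdots)$. The steps are: expand $(1+\eta)^{v/p}$, form the exponent $w^v/(2\lambda^v)$ and exponentiate to produce the factor $\exp(-w^v/2\lambda^v)$; multiply by the prefactor $w^{1-v}=\beta_n^{1-v}(1+\eta)^{(1-v)/p}$ and by the bracketed series of \eqref{eq1.8}; and verify that the leading constants collapse, using $e^{C}=2\Gamma(1/v)(\log n)^{(v-1)/v}$, to give $n^{-1}e^{-x}$. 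With $a=1-v^{-1}$ the surviving corrections are $-\frac{a}{2\log n}C^2$ (producing the $(\log\log n)^2/\log n$ term), together with $aCx/\log n$ and $aC/\log n$, and substituting $C=a\log\log n+\log(2\Gamma(1/v))$ reassembles the coefficients in \eqref{eq3.3}.

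The delicate point, and the step I expect to be the main obstacle, is the order-bookkeeping in (iii): because $\log\log n$ enters $\beta_n$ through $C$, several factors that superficially look $O(1/\log n)$ in fact carry $\log\log n$ or $(\log\log n)^2$ and must be retained, whereas genuine $O(1/\log n)$ contributions (from the bracketed series in \eqref{eq1.8}, from $(1+\eta)^{(1-v)/p}$, and from $w^{-v}$) are all $o(\log\log n/\log n)$ and may be discarded. The subtlest single contribution is the second-order term $\frac{xC}{v(\log n)^2}$ in the expansion of $(1+\eta)^{v/p}$: when multiplied by the leading $\log n$ in $\beta_n^v/(2\lambda^v)$ it yields a term of exact order $x\log\log n/\log n$ which combines with $-Cx/\log n$ to produce the correct coefficient $(1-v^{-1})^2$ of $x$; dropping it would corrupt the $\log\log n/\log n$ term. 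One must also check that the quadratic remainders generated in exponentiating are of order $(\log\log n)^4/(\log n)^2=o(\log\log n/\log n)$ and hence negligible.
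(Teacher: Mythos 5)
Your proposal is correct and follows essentially the same route as the paper: part (i) by exact evaluation of the Laplace tail, part (ii) by Taylor-expanding $(\alpha_n^*x+\beta_n^*)^{1/p}$ (the paper's \eqref{eq3.7}) inside the exact exponential tail, and part (iii) by substituting $z_{n,p}(x)=\beta_n(1+p\alpha_n x/\beta_n)^{1/p}$ into \eqref{eq1.8} and expanding the power prefactor, the exponential, and the bracketed series separately (the paper's \eqref{eq3.8}--\eqref{eq3.10}). Your bookkeeping — including the constants collapsing via $e^{C}=2\Gamma(1/v)(\log n)^{(v-1)/v}$, the extra $a C/\log n$ correction from $w^{1-v}$, and the second-order term $xC/(v(\log n)^2)$ that fixes the coefficient of $x$ at $(1-v^{-1})^2$ — checks out against the paper's computation.
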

\begin{proof}~~(i). If $v=1$ and $p=1$, by \eqref{eq1.1} the Laplace distribution function $G_{1}(x)$ is given by
\begin{eqnarray}
\label{eq3.6}
G_{1}(x)=1-\frac{1}{2}\exp(-\sqrt{2}x)\;\;\mbox{as}\;\; x>0.
\end{eqnarray}
Putting the values of $\alpha_{n}^{*}$ and $\beta_{n}^{*}$ given by Theorem \ref{thm1}(i) into \eqref{eq3.6}, we have
\[
1-G_1\big(\alpha^*_nx+\beta^*_n\big)= n^{-1}e^{-x}\]
as $\alpha^{*}_{n}x+\beta^{*}_{n}>0$.

(ii). If $v=1$ and $p\neq 1$,
note that
\begin{eqnarray}
\label{eq3.7}
(\alpha^*_nx+\beta^*_n)^{\frac{1}{p}}
=\frac{\log\frac{n}{2}}{\sqrt2}\left(1+\frac{x}{\log\frac{n}{2}}+\frac{(1-p)x^2}{2(\log\frac{n}{2})^2}
+\frac{(1-p)(1-2p)x^3}{6(\log\frac{n}{2})^3}+o(\frac{1}{(\log\frac{n}{2})^3})\right).
\end{eqnarray}
The claimed result \eqref{eq3.1} follows from \eqref{eq3.6} and \eqref{eq3.7}.

(iii). If $v\in(0,1)\bigcup(1,+\infty)$, with $z_{n,p}(x)=(\alpha_{n}^{*}x+\beta_{n}^{*})^{{1}/{p}}$ with normalizing constants $\alpha^{*}_{n}$ and $\beta^{*}_{n}$ given by \eqref{eq1.5}, we have
\begin{eqnarray}
\label{eq3.8}
\nonumber z_{n,p}^{1-v}(x)&=&2^{\frac{1-v}{v}}\lambda^{1-v}(\log n)^{\frac{1-v}{v}}\Big\{1+\frac{(v^{-1}-1)[x-\log2\Gamma(\frac{1}{v})]+(v^{-1}-1)^2(\log\log n)}{\log n}\\
&&-\frac{(v^{-1}-1)^3(\log\log n)^2}{2(\log n)^{2}}
+o\big((\frac{\log\log n}{\log n})^2\big)\Big\}
\end{eqnarray}
and
\begin{align}
\label{eq3.9}
\nonumber&g_v\big(z_{n,p}(x)\big)\\
\nonumber=&\frac{ve^{-x}}{\lambda2^{\frac{1}{v}}}n^{-1}(\log n)^{\frac{v-1}{v}}\exp\Big[-\frac{\frac{(1-v^{-1})^{3}}{2}(\log\log n)^2+(1-v^{-1})^{2}\left(\log 2\Gamma(\frac{1}{v}) -x\right)\log\log n}{\log n}+o\big(\frac{\log\log n}{\log n}\big)\Big]\\
=&\frac{ve^{-x}}{\lambda2^{\frac{1}{v}}}n^{-1}(\log n)^{\frac{v-1}{v}}\Big[1-\frac{\frac{(1-v^{-1})^{3}}{2}(\log\log n)^2+(1-v^{-1})^{2}\left(\log 2\Gamma(\frac{1}{v}) -x\right)\log\log n}{\log n}+o\big(\frac{\log\log n}{\log n}\big)\Big].
\end{align}
Further,
\begin{eqnarray}
\label{eq3.10}
\nonumber&&1+2(v^{-1}-1)\lambda^v\big(z_{n,p}(x)\big)^{-v}+4(v^{-1}-1)(v^{-1}-2)\lambda^{2v}\big(z_{n,p}(x)\big)^{-2v}\\
&=&1+\frac{v^{-1}-1}{\log n}-\frac{(v^{-1}-1)^{2}\log\log n}{\log^2n}+o(\frac{\log\log n}{\log^2n}).
\end{eqnarray}
Combining \eqref{eq1.8} and \eqref{eq3.8}-\eqref{eq3.10}, we derive \eqref{eq3.3}.
\end{proof}

\begin{lemma}\label{lemma2}
Let $G_v(x)$ denote the $\GED(v)$ distribution function with parameter $v>0$, then
\begin{itemize}
\item[(i)]~if $v\in(0,1)\bigcup(1,+\infty)$ and $p\neq v$, with normalizing constants $c_{n}$ and $d_{n}$ given by  \eqref{eq1.6}, we have
\begin{eqnarray}
\label{eq3.4}
\nonumber&&1-G_v\big((c_nx+d_n)^{\frac{1}{p}}\big)\\
&=&n^{-1}e^{-x}\Big\{1-\Big[v^{-1}(v-p)\lambda^vx^2-2v^{-1}(1-v)\lambda^vx-2(v^{-1}-1)\lambda^v\Big]b_{n}^{-v}\nonumber\\
&&+\Big[\frac{1}{2}\lambda^{2v}v^{-2}(v-p)^{2}x^{4}-v^{-2}(v-p)\lambda^{2v}(2-\frac{4}{3}v-\frac{4}{3}p)x^3+2v^{-2}(1-v)\lambda^{2v}x^2\nonumber\\
&&+4(v^{-1}-1)(v^{-1}-2)\lambda^{2v}x+4(v^{-1}-1)(v^{-1}-2)\lambda^{2v}\Big]b_{n}^{-2v}+o(b_n^{-2v})\Big\}.
\end{eqnarray}

\item[(ii)]~if $v\in(0,1)\bigcup(1,+\infty)$ and $p=v$, with normalizing constants $c^*_{n}$ and $d^*_{n}$ given by (\ref{eq1.11}), we have
\begin{eqnarray}
\label{eq3.5}
&&1-G_v\big((c^*_nx+d^*_n)^{\frac{1}{p}}\big)\nonumber\\
&=&n^{-1}e^{-x}\Big\{1 -2(v^{-1}-1)\lambda^{2v}[x^2-2(v^{-1}-2)x-3v^{-1}+5]{b_{n}^{-2v}}\nonumber\\
&&+(v^{-1}-1)\lambda^{3v}[\frac{4}{3}(v^{-1}-1)(4-v^{-1})x^3-8(v^{-1}-2)x^2-8(3v^{-1}-5)x]{b_n^{-3v}}\nonumber\\
&&+\frac{8}{3}(v^{-1}-1)(v^{-2}-11v^{-1}+16)\lambda^{3v}{b_n^{-3v}}+o({b_n^{-3v}})\Big\}.
\end{eqnarray}
\end{itemize}
\end{lemma}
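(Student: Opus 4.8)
The proof follows the substitution-and-expansion scheme already used for Lemma \ref{lemma1}(iii): insert the inverse-power argument into the tail expansion \eqref{eq1.8}, expand the power factor, the exponential (equivalently $g_v$), and the bracketed polynomial to the required order in $b_n^{-v}$, and then use the defining equation \eqref{eq1.7} to turn the leading exponential into $n^{-1}$.

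For part (i), set $w_n(x)=(c_nx+d_n)^{1/p}$. Since $d_n=b_n^p$ and $c_nx/d_n=2pv^{-1}\lambda^v b_n^{-v}x$ by \eqref{eq1.6}, we have $w_n(x)=b_n(1+u)^{1/p}$ with $u:=2pv^{-1}\lambda^v b_n^{-v}x=O(b_n^{-v})$. Substituting into \eqref{eq1.8} and using \eqref{eq1.1}, the constant and power prefactors collapse to $\frac{\lambda^{v-1}}{2^{1/v}\Gamma(1/v)}\,w_n^{1-v}\exp(-w_n^v/(2\lambda^v))$ times the bracket of \eqref{eq1.8}; after factoring out $b_n^{1-v}\exp(-b_n^v/(2\lambda^v))$ and invoking \eqref{eq1.7}, this factor becomes exactly $n^{-1}$. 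It then remains to expand, in powers of $u$, the three factors $\exp(-\frac{b_n^v}{2\lambda^v}[(1+u)^{v/p}-1])$, $(1+u)^{(1-v)/p}$, and $\{1+2(v^{-1}-1)\lambda^v w_n^{-v}+4(v^{-1}-1)(v^{-1}-2)\lambda^{2v}w_n^{-2v}+\cdots\}$. The binomial expansion $(1+u)^{v/p}=1+\frac{v}{p}u+\frac12\frac{v}{p}(\frac{v}{p}-1)u^2+\cdots$ makes the linear part of the exponent equal exactly $-x$ (this is how $c_n$ was chosen), while its quadratic and cubic parts feed the $b_n^{-v}$ and $b_n^{-2v}$ corrections. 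Multiplying the three series and collecting the coefficients of $b_n^{-v}$ and $b_n^{-2v}$, each a polynomial in $x$, yields \eqref{eq3.4}.

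For part (ii) we have $p=v$, so $(1+u)^{v/p}=1+u$ is exact and the quadratic-in-$u$ obstruction that produced the $b_n^{-v}$ term in part (i) disappears. Using the constants \eqref{eq1.11} I would compute $w_n^v=c_n^*x+d_n^*=b_n^v[1+2\lambda^v b_n^{-v}x+4(v^{-1}-1)\lambda^{2v}b_n^{-2v}(x+1)]$, whence $w_n^v/(2\lambda^v)=\frac{b_n^v}{2\lambda^v}+x+2(v^{-1}-1)\lambda^v b_n^{-v}(x+1)$ with no further terms, so that $\exp(-w_n^v/(2\lambda^v))=n^{-1}e^{-x}\exp(-2(v^{-1}-1)\lambda^v b_n^{-v}(x+1))$. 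Expanding the power factor $(c_n^*x+d_n^*)^{(1-v)/v}$ and the bracket of \eqref{eq1.8} (equivalently the reformulation \eqref{eq1.9}, in which the $x^{-1}$ term has already been absorbed into the prefactor) to order $b_n^{-3v}$ and multiplying out, the $b_n^{-v}$ contributions cancel identically; this cancellation is precisely the effect of the Hall-type choice $c_n^*=f(b_n^v)$, $d_n^*=b_n^v+4(v^{-1}-1)\lambda^{2v}b_n^{-v}$ recorded in \eqref{eq1.10}--\eqref{eq1.11}. The surviving $b_n^{-2v}$ and $b_n^{-3v}$ polynomials then give \eqref{eq3.5}.

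The main obstacle is the precision demanded in part (ii): obtaining the $b_n^{-3v}$ coefficient requires \eqref{eq1.8} through its $O(x^{-3v})$ term and every factor expanded one order beyond part (i), and one must verify that the $b_n^{-v}$ terms genuinely vanish, a cancellation that relies on the exact form of \eqref{eq1.11}. I expect matching the cubic and quartic powers of $x$ to be the most error-prone step; I would organize the computation by fixing the order in $b_n^{-v}$ first and only then collecting powers of $x$, using the normal specialization $v=2,\ \lambda=1$ of Hall (1980) as a consistency check.
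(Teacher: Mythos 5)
Your proposal follows essentially the same route as the paper's proof: write $(c_nx+d_n)^{1/p}=b_n(1+u)^{1/p}$, expand the normalized power factor, the exponential inside $g_v$, and the bracket of \eqref{eq1.8} to the required order in $b_n^{-v}$ (through $b_n^{-2v}$ in (i) and $b_n^{-3v}$ in (ii), the latter needing the bracket through its $x^{-3v}$ term), then use \eqref{eq1.7} to turn the leading factor into $n^{-1}e^{-x}$ --- these are exactly the paper's displays \eqref{eq3.11}--\eqref{eq3.13} and \eqref{eq3.14}--\eqref{eq3.16}, including your (correct) observation that the $b_n^{-v}$ terms cancel identically when $p=v$ because of the choice \eqref{eq1.11}. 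One small slip in (ii): as written, $\exp(-w_n^v/(2\lambda^v))=n^{-1}e^{-x}\exp(-2(v^{-1}-1)\lambda^v b_n^{-v}(x+1))$ is off by the factor $2^{1/v}\lambda^{1-v}\Gamma(1/v)\,b_n^{v-1}$ coming from \eqref{eq1.7}; it is the full prefactor $\frac{\lambda^{v-1}}{2^{1/v}\Gamma(1/v)}\,b_n^{1-v}\exp\left(-b_n^v/(2\lambda^v)\right)$ that equals $n^{-1}$, as you stated correctly in part (i), so this is a notational conflation rather than a gap.
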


\begin{proof}
(i), if $v\in(0,1)\bigcup(1,+\infty)$ and $p\neq v$, with normalizing constants $c_n$ and $d_n$, we have
\begin{eqnarray*}
z_{n,p}(x)=(c_nx+d_n)^{\frac{1}{p}}=b_n\left(1+\frac{2pv^{-1}\lambda^vx}{b^v_n}\right)^{\frac{1}{p}}.
\end{eqnarray*}
By arguments similar to \eqref{eq3.8}-\eqref{eq3.10}, by \eqref{eq1.6} and \eqref{eq1.7} we have
\begin{eqnarray}
\label{eq3.11}
z^{1-v}_{n,p}(x)=b_n^{1-v}\left(1+\frac{2v^{-1}(1-v)\lambda^vx}{b_n^v}+\frac{2(1-v)(1-v-p)v^{-2}\lambda^{2v}x^2}{b_n^{2v}}+o(\frac{1}{b_n^{2v}})\right)
\end{eqnarray}
and
\begin{eqnarray}
\label{eq3.12}
g_v\big(z_{n,p}(x)\big)&=&n^{-1}e^{-x}\frac{v}{2\lambda^v}b_n^{v-1}\left\{1-\frac{v^{-1}(v-p)\lambda^vx^2}{b_n^v}\right.\nonumber\\
&&\left.-\frac{4\lambda^{2v}v^{-2}(v-p)(v-2p)x^3-3\lambda^{2v}v^{-2}(v-p)^{2}x^{4}}{6b_n^{2v}}
+o(\frac{1}{b_n^{2v}})\right\}.
\end{eqnarray}
Further,
\begin{eqnarray}
\label{eq3.13}
\nonumber&&1+2(v^{-1}-1)\lambda^v\big(z_{n,p}(x)\big)^{-v}+4(v^{-1}-1)(v^{-1}-2)\lambda^{2v}\big(z_{n,p}(x)\big)^{-2v}\\
&=&1+\frac{2(v^{-1}-1)\lambda^v}{b_n^v}+\frac{4(v^{-1}-1)(v^{-1}-2)\lambda^{2v}-4(v^{-1}-1)\lambda^{2v}x}{b_n^{2v}}+o(\frac{1}{b_n^{2v}}).
\end{eqnarray}
Combining \eqref{eq1.8} and \eqref{eq3.11}-\eqref{eq3.13}, we derive the desired result \eqref{eq3.4}.

(ii). If $v\in(0,1)\bigcup(1,+\infty)$ and $p=v$, with normalizing constants $c^*_n$ and $d^*_n$ given by \eqref{eq1.11}, let
\begin{eqnarray*}
z_{n,v}(x)=(c^*_nx+d^*_n)^{\frac{1}{v}}=b_n\left(1+\frac{2\lambda^v}{b^v_n}x+\frac{4(v^{-1}-1)\lambda^{2v}(x+1)}{b_n^{2v}}\right)^{\frac{1}{v}}.
\end{eqnarray*}
Arguments similar to (\ref{eq3.8})-(\ref{eq3.10}), we can get
\begin{eqnarray}
\label{eq3.14}
\nonumber\big(z_{n,v}(x)\big)^{1-v}&=&b_n^{1-v}\left(1+\frac{2(v^{-1}-1)\lambda^{v}x}{b^v_n}
+\frac{4(v^{-1}-1)^2\lambda^{2v}(x+1)+2(v^{-1}-1)(v^{-1}-2)\lambda^{2v}x^2}{b_n^{2v}}\right.\\
&&\left.+\frac{8(v^{-1}-1)^2(v^{-1}-2)\lambda^{3v}x(x+1)}{b_n^{3v}}+o(\frac{1 }{b_n^{3v}})\right),
\end{eqnarray}
and by \eqref{eq1.7},
\begin{eqnarray}
\label{eq3.15}
\nonumber g_v\big(z_{n,v}(x)\big)&=&\frac{ve^{-x}}{2\lambda^v}n^{-1}b_n^{v-1}\left\{1-\frac{2(v^{-1}-1)\lambda^v(x+1)}{b_n^v}
+\frac{2(v^{-1}-1)^2\lambda^{2v}(x+1)^2}{b_n^{2v}}\right.\\
&&\left.-\frac{4(v^{-1}-1)^3\lambda^{3v}(x+1)^3}{3b_n^{3v}}+o(\frac{1}{b_n^{3v}})\right\}.
\end{eqnarray}
Further,
\begin{eqnarray}
\label{eq3.16}
&&1+2(v^{-1}-1)\lambda^v\big(z_{n,v}(x)\big)^{-v}+4(v^{-1}-1)(v^{-1}-2)\lambda^{2v}\big(z_{n,v}(x)\big)^{-2v} \nonumber\\
&&+8(v^{-1}-1)(v^{-1}-2)(v^{-1}-3)\lambda^{3v}\big(z_{n,v}(x)\big)^{-3v}\nonumber\\
&=&1+\frac{2(v^{-1}-1)\lambda^v}{b_n^v}+\frac{4(v^{-1}-1)(v^{-1}-2)\lambda^{2v}-4(v^{-1}-1)\lambda^{2v}x}{b_n^{2v}}\nonumber\\
&&+\frac{8(v^{-1}-1)\lambda^{3v}x^2-8(v^{-1}-1)(3v^{-1}-5)\lambda^{3v}x}{b_n^{3v}}\nonumber\\
&&+\frac{8(v^{-1}-1)\lambda^{3v}(v^{-2}-6v^{-1}+7)}{b_n^{3v}}+o(\frac{1}{b_n^{3v}}).
\end{eqnarray}
Combining \eqref{eq1.8} and \eqref{eq3.14}-\eqref{eq3.16}, we derive \eqref{eq3.5}.
\end{proof}

\begin{lemma}\label{lemma3}
Let $\{X_n,n\geq1\}$ be a sequence of i.i.d. random variables with common distribution $G_v(x)$ with parameter $v > 0$ and $M_{n,r}$ denotes the $r$th largest order statistics of $\{X_{1}, X_{2}, \cdots, X_{n} \}$. Assume that there exists positive constant $z_{n}(x)$ such that $n(1-G_{v}(z_{n}(x))\to e^{-x}$, then
\begin{eqnarray}
\label{eq3.17}
&&\P\big(|M_{n,r}|^p\leq z_{n}^{p}(x)\big) -\Lambda_{r}(x)\nonumber\\
&=&\Lambda(x)\left[1-\frac{1}{2}(1-\theta_{n,v}(x))(r-1-e^{-x})\right](1-\theta_{n,v}(x))\frac{e^{-rx}}{(r-1)!}+O(n^{-1}),
\end{eqnarray}
where $ \theta_{n,v}(x)=ne^{x}\left(1-G_{v}(z_{n}(x))\right)$.
\end{lemma}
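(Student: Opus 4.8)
The plan is to reduce the powered/absolute-value event to a plain exceedance count, replace the resulting binomial tail by its Poisson limit at cost $O(n^{-1})$, and then Taylor-expand the Poisson tail probability in the small quantity $1-\theta_{n,v}(x)$.

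First I would dispose of the absolute value. Since $p>0$ and $z_{n}(x)>0$, the event $\{|M_{n,r}|^{p}\le z_{n}^{p}(x)\}$ equals $\{-z_{n}(x)\le M_{n,r}\le z_{n}(x)\}$, so that $\P(|M_{n,r}|^{p}\le z_{n}^{p}(x))=\P(M_{n,r}\le z_{n}(x))-\P(M_{n,r}<-z_{n}(x))$. Writing $p_{n}=1-G_{v}(z_{n}(x))$ and using the symmetry $G_{v}(-t)=1-G_{v}(t)$, the lower-tail term forces at least $n-r+1$ observations below $-z_{n}(x)$, whose probability is bounded by a constant multiple of $p_{n}^{\,n-r+1}$; since $np_{n}\to e^{-x}$ this is super-exponentially small and is absorbed into $O(n^{-1})$. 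For the upper event I would use the exact order-statistic identity $\P(M_{n,r}\le z_{n}(x))=\sum_{j=0}^{r-1}\binom{n}{j}p_{n}^{\,j}(1-p_{n})^{n-j}$, recording that $np_{n}=\theta_{n,v}(x)e^{-x}\to e^{-x}$.

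Next I would pass from the binomial tail to the Poisson tail. Put $u_{n}=np_{n}=\theta_{n,v}(x)e^{-x}$. From $\binom{n}{j}p_{n}^{\,j}=\frac{u_{n}^{\,j}}{j!}\prod_{k=0}^{j-1}(1-k/n)=\frac{u_{n}^{\,j}}{j!}\bigl(1+O(n^{-1})\bigr)$ and, using $\log(1-p_{n})=-p_{n}-p_{n}^{2}/2-\cdots$ together with $np_{n}^{2}=u_{n}^{2}/n=O(n^{-1})$, from $(1-p_{n})^{n-j}=e^{-u_{n}}\bigl(1+O(n^{-1})\bigr)$, I would sum the finitely many terms $j=0,\dots,r-1$ to obtain $\P(M_{n,r}\le z_{n}(x))=F(u_{n})+O(n^{-1})$, where $F(u)=\sum_{j=0}^{r-1}e^{-u}u^{\,j}/j!$ is the $\mathrm{Poisson}(u)$ tail distribution function. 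Finally I would Taylor-expand $F$ about $u=e^{-x}$. The key simplification is the telescoping identity $F'(u)=-e^{-u}u^{r-1}/(r-1)!$, whence $F''(u)=\frac{u^{r-2}e^{-u}}{(r-1)!}\bigl[u-(r-1)\bigr]$; moreover $F(e^{-x})=\Lambda_{r}(x)$ and $u_{n}-e^{-x}=-e^{-x}\bigl(1-\theta_{n,v}(x)\bigr)$. Substituting these into $F(u_{n})-F(e^{-x})=F'(e^{-x})(u_{n}-e^{-x})+\tfrac12 F''(e^{-x})(u_{n}-e^{-x})^{2}+\cdots$, and using $F'(e^{-x})=-\Lambda(x)e^{-(r-1)x}/(r-1)!$, reproduces exactly $\Lambda(x)\bigl[1-\tfrac12(1-\theta_{n,v}(x))(r-1-e^{-x})\bigr](1-\theta_{n,v}(x))e^{-rx}/(r-1)!$.

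The main obstacle is the bookkeeping of the binomial-to-Poisson correction: one must verify that each factor $\prod_{k}(1-k/n)$ and $(1-p_{n})^{-j}$ contributes only at order $n^{-1}$ for fixed $x$, so that the two genuinely slow quantities $(1-\theta_{n,v})$ and $(1-\theta_{n,v})^{2}$ survive while all $O(n^{-1})$ effects are swept into the remainder. Taylor's theorem leaves a remainder of order $(1-\theta_{n,v}(x))^{3}$; combined with the binomial–Poisson discrepancy and the discarded lower tail this is the total error, and since $1-\theta_{n,v}(x)\to0$ the displayed quadratic is the leading correction, the cubic term being negligible relative to the terms retained after the rescalings used in Theorems \ref{thm1}--\ref{thm2}. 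Once the telescoping formula for $F'$ is in hand the second-order structure is forced, and the identifications of $h_{v},q_{v},s_{v},b_{v}$ in the theorems follow by inserting the expansions of $\theta_{n,v}$ furnished by Lemmas \ref{lemma1}--\ref{lemma2}.
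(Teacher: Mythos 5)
Your proof is correct, and while its skeleton matches the paper's --- discard the lower-tail event as super-exponentially small, replace the binomial tail by a Poisson quantity at cost $O(n^{-1})$, then expand to second order in $1-\theta_{n,v}(x)$ --- the way you execute the expansion is genuinely different and cleaner. The paper expands $\exp[(1-\theta_{n,v})e^{-x}]$ and $[1-(1-\theta_{n,v})]^{j}$ separately to second order, multiplies them out, sums over $j$ using the identities \eqref{add1}--\eqref{add2}, and then recombines telescoping differences such as $\Lambda_{r}(x)-\Lambda_{r-1}(x)=\Lambda(x)e^{-(r-1)x}/(r-1)!$. You instead package the Poisson tail as $F(u)=\sum_{j=0}^{r-1}e^{-u}u^{j}/j!$ and Taylor-expand about $u=e^{-x}$, letting the telescoping identity $F'(u)=-e^{-u}u^{r-1}/(r-1)!$ do all the combinatorial work, so the coefficient $r-1-e^{-x}$ falls out of $F''$ automatically; this is shorter, less error-prone, and extends to any order by taking more Taylor terms. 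What the paper's heavier bookkeeping buys is reusability: the same $\Lambda_{r-k}$ identities (with third-moment and double-sum analogues) are exactly what its proof of Theorem \ref{thm1}(i) needs, where $\theta_{n,1}\equiv 1$ and the relevant corrections are the binomial-versus-Poisson discrepancies at orders $n^{-1}$ and $n^{-2}$ --- a regime your lemma-level shortcut does not cover, and is not required to. Two small remarks: your bound on the lower-tail term should carry the combinatorial factor $\binom{n}{r-1}=O(n^{r-1})$ in front of $p_{n}^{\,n-r+1}$ (harmless, since the product still vanishes super-exponentially); and your explicit acknowledgement that the true remainder is $O((1-\theta_{n,v})^{3})+O(n^{-1})$, rather than the bare $O(n^{-1})$ written in \eqref{eq3.17}, is actually more careful than the paper, whose own proof silently drops the same cubic terms; the cubic is indeed negligible in every application of the lemma in Theorems \ref{thm1} and \ref{thm2}.
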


\begin{proof} First, note that
\begin{eqnarray}
\label{add1}
\Lambda(x)\sum_{j=0}^{r-1}\frac{je^{-jx}}{j!}=e^{-x}\Lambda_{r-1}(x)
\end{eqnarray}
and
\begin{eqnarray}
\label{add2}
\Lambda(x)\sum_{j=0}^{r-1} \frac{j^{2}e^{-jx}}{j!}
&=&e^{-2x}\Lambda_{r-2}(x)+e^{-x}\Lambda_{r-1}(x).
\end{eqnarray}
By arguments similar to Hall (1980) and some tedious calculation, we have
\begin{eqnarray*}
&&\P\big(|M_{n,r}|^p\leq z_{n}^{p}(x)\big) -\Lambda_{r}(x)\\
&=&\sum_{j=0}^{r-1}\binom n j \left[1-n^{-1}e^{-x}\theta_{n,v}(x)\right]^{n-j}\left[n^{-1}e^{-x}\theta_{n,v}(x)\right]^{j}-\Lambda_{r}(x)+O(n^{r-1}2^{-n})\\
&=&\sum_{j=0}^{r-1}\left(1-n^{-1}e^{-x}\theta_{n,v}(x)\right)^{n}\theta^{j}_{n,v}(x)\frac{e^{-jx}}{j!}-\Lambda_{r}(x)+O(n^{-1})\\
&=&\Lambda(x)\sum_{j=0}^{r-1}\left\{\exp\left[(1-\theta_{n,v}(x))e^{-x}\right]\right\}\left[1-(1-\theta_{n,v}(x))\right]^{j}\frac{e^{-jx}}{j!}-\Lambda_{r}(x)+O(n^{-1})\\
&=&\Lambda(x)\sum_{j=0}^{r-1}\left[1+(1-\theta_{n,v}(x))e^{-x}+\frac{(1-\theta_{n,v}(x))^{2}}{2}e^{-2x}\right]\\
&&\times\left[1-j(1-\theta_{n,v}(x))+\frac{j(j-1)}{2}(1-\theta_{n,v}(x))^{2}\right]\frac{e^{-jx}}{j!}-\Lambda_{r}(x)+O(n^{-1})\\
&=&\Lambda(x)\sum_{j=0}^{r-1}\Big\{1+(e^{-x}-j)(1-\theta_{n,v}(x))\\
&&+\left(\frac{j(j-1)}{2}-je^{-x}+\frac{e^{-2x}}{2}\right)(1-\theta_{n,v}(x))^{2}\Big\}\frac{e^{-jx}}{j!}-\Lambda_{r}(x)+O(n^{-1})\\
&=&(1-\theta_{n,v}(x))\left(\Lambda_{r}(x)-\Lambda_{r-1}(x)\right)e^{-x}\nonumber\\
&&-\frac{1}{2}(1-\theta_{n,v}(x))^{2}
\Big[2\Lambda_{r-1}(x)-\Lambda_{r-2}(x)-\Lambda_{r}(x)\Big]e^{-2x}+O(n^{-1})\\
&=&\Lambda(x)(1-\theta_{n,v}(x))\frac{e^{-rx}}{(r-1)!}-\frac{1}{2}\Lambda(x)(1-\theta_{n,v}(x))^{2}\frac{e^{-(r+1)x}[(r-1)e^{x}-1]}{(r-1)!}+O(n^{-1})\\
&=&\Lambda(x)\left[1-\frac{1}{2}(1-\theta_{n,v}(x))(r-1-e^{-x})\right](1-\theta_{n,v}(x))\frac{e^{-rx}}{(r-1)!}+O(n^{-1}).
\end{eqnarray*}
The desired result follows.
\end{proof}

\noindent{\bf Proof of Theorem \ref{thm1}.}
(i). Note that Lemma \ref{lemma3} shows that $\P\left(|M_{n,r}|\leq z_{n,1}(x)\right)-\Lambda_{r}(x)=O(1/n)$ since $\theta_{n,1}(x)=ne^{x}(1-G_{1}(z_{n,1}(x))=1$ by Lemma \ref{lemma1}(i) in the case of $v=p=1$, where $z_{n,1}(x)=\alpha_{n}^{*}x+\beta_{n}^{*}$ with $\alpha_{n}^{*}$ and $\beta_{n}^{*}$ given by Theorem \ref{thm1}(i). Higher-order expansions are needed here. By using \eqref{eq3.2},\eqref{add1}, \eqref{add2} and the following two facts
\[
\Lambda(x)\sum_{j=0}^{r-1} \frac{j^{3}e^{-jx}}{j!}=e^{-3x}\Lambda_{r-3}(x)+3e^{-2x}\Lambda_{r-2}(x)+e^{-x}\Lambda_{r-1}(x)
\]
and
\[
\Lambda(x)\sum_{j=3}^{r-1}\sum_{i=1}^{j-2}\sum_{k=i+1}^{j-1}\frac{ik e^{-jx}}{j!}=\frac{e^{-4x}}{8}\Lambda_{r-4}(x)+\frac{e^{-3x}}{3}\Lambda_{r-3}(x),
\]
we have
\begin{eqnarray}\label{eq4.3}
&&\P\left(|M_{n,r}|\leq z_{n,1}(x)\right)-\Lambda_{r}(x)\nonumber\\
&=&\sum_{j=0}^{r-1}\frac{n(n-1) \ldots (n-j+1)}{j!}\left(1-\frac{e^{-x}}{n}\right)^{n-j}(\frac{e^{-x}}{n})^{j}-\Lambda_{r}(x)+O(n^{r-1}2^{-n})\nonumber\\
&=&\Lambda(x)\sum_{j=0}^{r-1}\left(1-\frac{j(j-1)}{2n}+n^{-2}\sum_{i=1}^{j-2}\sum_{k=i+1}^{j-1}ik+o(n^{-2})\right)\nonumber\\
&&\times\Big[1+\frac{2j-e^{-x}}{2n}e^{-x}+\frac{[4(3j-2e^{-x})+3(2j-e^{-x})^{2}]e^{-2x}}{24n^{2}}+o(n^{-2})\Big]\frac{e^{-jx}}{j!}-\Lambda_{r}(x)+o(n^{-2})\nonumber\\
&=&\Lambda(x)\sum_{j=0}^{r-1}\Big\{\left[-\frac{j(j-1)}{2}+\frac{1}{2}(2j-e^{-x})e^{-x}\right]n^{-1}+\Big[\sum_{i=1}^{j-2}\sum_{k=i+1}^{j-1}ik-\frac{j(j-1)}{4}(2j-e^{-x})e^{-x}\nonumber\\
&&+\left(\frac{1}{6}(3j-2e^{-x})+\frac{1}{8}(2j-e^{-x})^{2}\right)e^{-2x}\Big]n^{-2}\Big\}\frac{e^{-jx}}{j!}+o(n^{-2})\nonumber\\
&=&\frac{1}{2n}\Big[\Lambda_{r-1}(x)-\Lambda_{r-2}(x)-\left(\Lambda_{r}(x)-\Lambda_{r-1}(x)\right)\Big]e^{-2x}+\frac{1}{n^{2}}\Big[\frac{e^{-4x}}{8}\Lambda_{r-4}(x)
+(\frac{1}{3}-\frac{e^{-x}}{2})e^{-3x}\Lambda_{r-3}(x)\nonumber\\
&&+(\frac{3e^{-x}}{4}-1)e^{-3x}\Lambda_{r-2}(x)+(1-\frac{e^{-x}}{2})e^{-3x}\Lambda_{r-1}(x)+(\frac{e^{-x}}{8}-\frac{1}{3})e^{-3x}\Lambda_{r}(x)\Big]+o(n^{-2})\nonumber\\
&=&\frac{1}{2n}\Big[\Lambda_{r-1}(x)-\Lambda_{r-2}(x)-\left(\Lambda_{r}(x)-\Lambda_{r-1}(x)\right)\Big]e^{-2x}-\frac{1}{n^{2}}\Big[\frac{e^{-4x}}{8}\left(\Lambda_{r-3}(x)-\Lambda_{r-4}(x)\right)\nonumber\\
&&-\frac{e^{-3x}}{24}(9e^{-x}-8)\left(\Lambda_{r-2}(x)-\Lambda_{r-3}(x)\right)+\frac{e^{-3x}}{24}(9e^{-x}-16)\left(\Lambda_{r-1}(x)-\Lambda_{r-2}(x)\right)\nonumber\\
&&-\frac{e^{-3x}}{24}(3e^{-x}-8)\left(\Lambda_{r}(x)-\Lambda_{r-1}(x)\right)\Big]+o(n^{-2})\nonumber\\
&=&\frac{e^{-2x}}{2n}\Lambda(x)\left(\frac{e^{-(r-2)x}}{(r-2)!}-\frac{e^{-(r-1)x}}{(r-1)!}\right)-\frac{1}{n^{2}}\Big[\frac{e^{-4x}}{8}\frac{e^{-(r-4)x}}{(r-4)!}\nonumber\\
&&-\frac{e^{-3x}}{24}(9e^{-x}-8)\frac{e^{-(r-3)x}}{(r-3)!}+\frac{e^{-3x}}{24}(9e^{-x}-16)\frac{e^{-(r-2)x}}{(r-2)!}
-\frac{e^{-3x}}{24}(3e^{-x}-8)\frac{e^{-(r-1)x}}{(r-1)!}\Big]\Lambda(x)+o(n^{-2})\nonumber\\
&=&\Lambda(x)\frac{(r-1)e^{x}-1}{2n\{(r-1)!\}}e^{-(r+1)x}+\frac{e^{-(r+2)x}}{24n^{2}\{(r-1)!\}}\Big[(-3r^{3}+10r^{2}-9r+2)e^{2x}\nonumber\\
&&+(9r^{2}-11r+2)e^{x}+3e^{-x}-9r+1\Big]\Lambda(x)+o(n^{-2}).
\end{eqnarray}
Hence, it follows from \eqref{eq4.3} that
\begin{eqnarray*}
\lim_{n\to\infty}n\Big[ \P\left(|M_{n,r}|\leq \alpha_{n}^{*}x+\beta_{n}^{*}\right)-\Lambda_{r}(x)\Big]=\frac{e^{-(r+1)x}\left[(r-1)e^{x}-1\right]}{2(r-1)!}\Lambda(x)
\end{eqnarray*}
and
\begin{eqnarray*}
&&\lim_{n\to\infty}n\Big\{n\Big[ \P\left(|M_{n,r}|\leq \alpha_{n}^{*}x+\beta_{n}^{*}\right)-\Lambda_{r}(x)\Big]-\frac{e^{-(r+1)x}\left[(r-1)e^{x}-1\right]}{2(r-1)!}\Lambda(x)\Big\}\\
&&=\frac{e^{-(r+2)x}}{24(r-1)!}\left[(-3r^{3}+10r^{2}-9r+2)e^{2x}+(9r^{2}-11r+2)e^{x}+3e^{-x}-9r+1\right]\Lambda(x).
\end{eqnarray*}

(ii). In the case of $v=1, p\ne 1$, let $z_{n,p}(x)=(\alpha_{n}^{*}x+\beta_{n}^{*})^{1/p}$ with $\alpha_{n}^{*}$ and $\beta_{n}^{*}$ given by Theorem \ref{thm1}(ii). By using \eqref{eq3.1}, with $\theta_{n,p}(x)=ne^{x}(1-G_{1}(z_{n,p}(x))$ we have
\begin{align}
\label{eq4.1}
1-\theta_{n,p}(x)
=\frac{(1-p)x^2}{2\log \frac{n}{2}}-\frac{(1-p)[3(1-p)x-4(1-2p)]x^{3}}{24(\log \frac{n}{2})^{2}}+o(\frac{1}{(\log \frac{n}{2})^{2}}).
\end{align}
It follows from \eqref{eq4.1} and Lemma \ref{lemma3} that
\begin{eqnarray}
\label{add3}
&&\P\big(|M_{n,r}|^p\leq \alpha_{n}^{*}x+\beta_{n}^{*}\big) -\Lambda_{r}(x)\nonumber\\
&=&\Lambda(x)\left[1-\frac{1}{2}(1-\theta_{n,p}(x))(r-1-e^{-x})\right](1-\theta_{n,p}(x))\frac{e^{-rx}}{(r-1)!}+O(n^{-1})\nonumber\\
&=&\Lambda(x)\Big[\frac{(1-p)x^2}{2\log \frac{n}{2}}-\frac{(1-p)[3(1-p)x-4(1-2p)]x^{3}}{24(\log \frac{n}{2})^{2}}+o(\frac{1}{(\log \frac{n}{2})^{2}})\Big]\nonumber\\
&&\times\Big\{1-\frac{r-1-e^{-x}}{2}\Big[\frac{(1-p)x^2}{2\log \frac{n}{2}}-\frac{(1-p)[3(1-p)x-4(1-2p)]x^{3}}{24(\log \frac{n}{2})^{2}}+o(\frac{1}{(\log \frac{n}{2})^{2}})\Big]\Big\}\frac{e^{-rx}}{(r-1)!}+O(n^{-1})\nonumber\\
&=&\Lambda(x)\Big\{\frac{(1-p)x^2}{2\log\frac{n}{2}}+\frac{(1-p)x^{3}}{24(\log \frac{n}{2})^2}\Big[4(1-2p)-3(1-p)rx+3(1-p)xe^{-x}\Big]\Big\}\frac{e^{-rx}}{(r-1)!}+o\left(\frac{1}{(\log \frac{n}{2})^2}\right).\nonumber\\
\end{eqnarray}
Hence, following \eqref{add3} we have
\begin{eqnarray*}
\lim_{n\to\infty}(\log\frac{n}{2})\Big[ \P\left(|M_{n,r}|^p\leq \alpha_{n}^{*}x+\beta_{n}^{*}\right)-\Lambda_{r}(x)\Big]=\frac{(1-p)x^2e^{-rx}}{2(r-1)!}\Lambda(x)
\end{eqnarray*}
and
\begin{eqnarray*}
&&\lim_{n\to\infty}(\log n)\Big\{(\log\frac{n}{2})\Big[ \P\left(|M_{n,r}|^p\leq \alpha_{n}^{*}x+\beta_{n}^{*}\right)-\Lambda_{r}(x)\Big]-\frac{(1-p)x^2e^{-rx}}{2(r-1)!}\Lambda(x)\Big\}\\
&=&\frac{(1-p)x^{3}e^{-rx}\left[4(1-2p)-3(1-p)rx+3(1-p)xe^{-x}\right]}{24(r-1)!}\Lambda(x).
\end{eqnarray*}

(iii). For the case of $v\ne 1$ and $p>0$, let $z_{n,p}(x)=(\alpha_{n}^{*}x+\beta_{n}^{*})^{1/p}$ with $\alpha_{n}^{*}$ and $\beta_{n}^{*}$ given by Theorem \ref{thm1}(iii). With $\theta_{n,p}(x)=ne^{x}(1-G_{v}(z_{n,p}(x))$ we have
\begin{align}
\label{eq4.4}
1-\theta_{n,p}(x)=\frac{(1-v^{-1})^3(\log\log n)^{2}}{2\log n}-\frac{(1-v^{-1})^{2}(1+x-\log\{2\Gamma(\frac{1}{v})\})\log\log n}{\log n}+o\big(\frac{\log\log n}{\log n}\big)
\end{align}
due to \eqref{eq3.3}. Hence, it follows from \eqref{eq4.4} and Lemma \ref{lemma3} that
\begin{eqnarray*}
&&\P\big(|M_{n,r}|^p\leq \alpha_{n}^{*}x+\beta_{n}^{*}\big) -\Lambda_{r}(x)\nonumber\\
&=&\Lambda(x)\left[1-\frac{1}{2}(1-\theta_{n,p}(x))(r-1-e^{-x})\right](1-\theta_{n,p}(x))\frac{e^{-rx}}{(r-1)!}+O(n^{-1})\nonumber\\
&=&\Lambda(x)\Big[\frac{(1-v^{-1})^3(\log\log n)^{2}}{2\log n}-\frac{(1-v^{-1})^{2}(1-\log2\Gamma(\frac{1}{v})+x)\log\log n}{\log n}+o\big(\frac{\log\log n}{\log n}\big)\Big]\nonumber\\
&&\times\Big\{1-\frac{r-1-e^{-x}}{2}\Big[\frac{(1-v^{-1})^3(\log\log n)^{2}}{2\log n}-\frac{(1-v^{-1})^{2}(1-\log2\Gamma(\frac{1}{v})+x)\log\log n}{\log n}\nonumber\\
&&+o\big(\frac{\log\log n}{\log n}\big)\Big]\Big\}\frac{e^{-rx}}{(r-1)!}+O(n^{-1})\nonumber\\
&=&\Lambda(x)\Big[\frac{(1-v^{-1})^3(\log\log n)^{2}}{2\log n}-\frac{(1-v^{-1})^{2}(1-\log2\Gamma(\frac{1}{v})+x)\log\log n}{\log n}\Big]\frac{e^{-rx}}{(r-1)!}+o\big(\frac{\log\log n}{\log n}\big),
\end{eqnarray*}
implies
\begin{eqnarray*}
&&\lim_{n\to\infty}(\frac{\log n}{(\log\log n)^2})\big[\P\left(|M_{n,r}|^{p}\le \alpha_{n}^{*}x+\beta_{n}^{*}\right)-\Lambda_{r}(x)\big]=\frac{(1-v^{-1})^{3}e^{-rx}}{2(r-1)!}\Lambda(x)
\end{eqnarray*}
and
\begin{eqnarray*}
&&\lim_{n\to\infty}(\log\log n)\Big\{(\frac{\log n}{(\log\log n)^2})\big[\P\left(|M_{n,r}|^{p}\le \alpha_{n}^{*}x+\beta_{n}^{*}\right)-\Lambda_{r}(x)\big]-\frac{(1-v^{-1})^{3}e^{-rx}}{2(r-1)!}\Lambda(x)\Big\}\\
&&=-(1-v^{-1})^{2}\left(1-\log2\Gamma(\frac{1}{v})+x\right)\frac{e^{-rx}}{(r-1)!}\Lambda(x).
\end{eqnarray*}
The proof is complete.
\qed

\noindent{\bf Proof of Theorem \ref{thm2}.}
(i). For the case of $v\in(0,1)\bigcup(1,+\infty)$ and $p\neq v$, let $z_{n,p}(x)=(c_{n}x+d_{n})^{1/p}$ with $c_{n}$ and $d_{n}$ given by Theorem \ref{thm2}(i).  By using \eqref{eq3.4} we have
\begin{eqnarray}
\label{eq4.5}
 1-\theta_{n,p}(x)=h_{v}(x)e^{x}b_{n}^{-v}+q_{v}(x)e^{x}b_{n}^{-2v}+o(b_{n}^{-2v}),
\end{eqnarray}
where  $\theta_{n,p}(x)=ne^{x}(1-G_{v}(z_{n,p}(x))$, and $h_{v}(x)$ and $q_{v}(x)$ are given by \eqref{eq2.1} and \eqref{eq2.2}, respectively.
It follows from Lemma \ref{lemma3} and \eqref{eq4.5} that
\begin{eqnarray}
\label{add5}
&&\P\big(|M_{n,r}|^p\leq c_{n}x+d_{n}\big) -\Lambda_{r}(x)\nonumber\\
&=&\Lambda(x)\left[1-\frac{1}{2}(1-\theta_{n,p}(x))(r-1-e^{-x})\right](1-\theta_{n,p}(x))\frac{e^{-rx}}{(r-1)!}+O(n^{-1})\nonumber\\
&=&\Lambda(x)\left[h_{v}(x)e^{x}b_{n}^{-v}+q_{v}(x)e^{x}b_{n}^{-2v}+o(b_{n}^{-2v})\right]\nonumber\\
&&\times\Big\{1-\frac{r-1-e^{-x}}{2}\left[h_{v}(x)e^{x}b_{n}^{-v}+q_{v}(x)e^{x}b_{n}^{-2v}+o(b_{n}^{-2v})\right]\Big\}\frac{e^{-rx}}{(r-1)!}+O(n^{-1})\nonumber\\
&=&\Lambda(x)\left[h_{v}(x)b_{n}^{-v}+\left(q_{v}(x)+\frac{1-(r-1)e^{x}}{2}h_{v}^{2}(x)\right)b_{n}^{-2v}\right]\frac{e^{-(r-1)x}}{(r-1)!}+o(b_{n}^{-2v}).
\end{eqnarray}
Hence,  it follows from \eqref{add5}  that
\begin{eqnarray*}
&&\lim_{n\to\infty}b_n^v\Big[\P\left(|M_{n,r}|^p\leq c_nx+d_n\right)-\Lambda_{r}(x)\big]=\Lambda(x)h_v(x)\frac{e^{-(r-1)x}}{(r-1)!}
\end{eqnarray*}
and
   \begin{eqnarray*}
   &&\lim_{n\to\infty}b_n^v\bigg\{b_n^v\Big[\P\left(|M_{n,r}|^p\leq c_nx+d_n\right)-\Lambda_{r}(x)\big]-\Lambda(x)h_v(x)\frac{e^{-(r-1)x}}{(r-1)!}\bigg\}\\
   &&=\left[q_v(x)+\left(1-(r-1)e^{x}\right)\frac{h_v^2(x)}{2}\right]\frac{e^{-(r-1)x}}{(r-1)!}\Lambda(x).
   \end{eqnarray*}

(ii). For the case of $v\in(0,1)\bigcup(1,+\infty)$ and $p= v$, let $z_{n,p}(x)=(c_{n}^{*}x+d_{n}^{*})^{1/v}$ with $c_{n}^{*}$ and $d_{n}^{*}$ given by Theorem \ref{thm2}(ii). With $\theta_{n,v}(x)=ne^{x}(1-G_{v}(z_{n,v}(x))$  and $s_{v}(x)$ and $b_{v}(x)$ given by \eqref{eq2.3} and \eqref{eq2.4}, it follows from \eqref{eq3.5} that
\begin{eqnarray}
\label{eq4.6}
1-\theta_{n,v}(x)=s_{v}(x)e^{x}b_{n}^{-2v}+b_{v}(x)e^{x}b_{n}^{-3v}+o(b_{n}^{-3v}).
   \end{eqnarray}
  It follows from Lemma \ref{lemma3} and \eqref{eq4.6} that
   \begin{eqnarray}
   \label{add6}
   &&\P\big(|M_{n,r}|^{v}\leq c_{n}^{*}x+d_{n}^{*}\big) -\Lambda_{r}(x)\nonumber\\
   &=&\Lambda(x)\left[1-\frac{1}{2}(1-\theta_{n,v}(x))(r-1-e^{-x})\right](1-\theta_{n,v}(x))\frac{e^{-rx}}{(r-1)!}+O(n^{-1})\nonumber\\
   &=&\Lambda(x)\left[s_{v}(x)e^{x}b_{n}^{-2v}+b_{v}(x)e^{x}b_{n}^{-3v}+o(b_{n}^{-3v})\right]\nonumber\\
   &\times& \Big\{1-\frac{r-1-e^{-x}}{2}\left[S_{v}(x)e^{x}b_{n}^{-2v}+B(x)e^{x}b_{n}^{-3v}+o(b_{n}^{-3v})\right]\Big\}\frac{e^{-rx}}{(r-1)!}+O(n^{-1})\nonumber\\
   &=&\Lambda(x)\left[s_{v}(x)b_{n}^{-2v}+b_{v}(x)b_{n}^{-3v}\right]\frac{e^{-(r-1)x}}{(r-1)!}+o(b_{n}^{-3v}),
    \end{eqnarray}
which implies
\begin{eqnarray*}
\lim_{n\to\infty}b_n^{2v}\Big[\P\left(|M_{n,r}|^v\leq c_n^*x+d_n^*\right)-\Lambda_{r}(x)\Big]=\Lambda(x)s_v(x)\frac{e^{-(r-1)x}}{(r-1)!}
\end{eqnarray*}
and
\begin{eqnarray*}
\lim_{n\to\infty}b_n^{v}\bigg\{b_n^{2v}\Big[\P\left(|M_{n,r}|^v\leq c_n^*x+d_n^*\right)-\Lambda_{r}(x)\Big]-\Lambda(x)s_v(x)\frac{e^{-(r-1)x}}{(r-1)!}\bigg\}=\Lambda(x)b_{v}(x)\frac{e^{-(r-1)x}}{(r-1)!}.
\end{eqnarray*}
The proof is complete.\qed

\vskip15pt

\end{document}